\newcommand{\refcheckize}[1]{%
  \expandafter\let\csname @@\string#1\endcsname#1%
  \expandafter\DeclareRobustCommand\csname relax\string#1\endcsname[1]{%
    \csname @@\string#1\endcsname{##1}\@for\@temp:=##1\do{\wrtusdrf{\@temp}\wrtusdrf{{\@temp}}}}%
  \expandafter\let\expandafter#1\csname relax\string#1\endcsname
}
\newtheorem{theorem}{Theorem}
\newtheorem{lemma}[theorem]{Lemma}
\newtheorem{prop}[theorem]{Proposition}
\newtheorem{cor}[theorem]{Corollary}
\theoremstyle{definition}
\theoremstyle{remark}
\newtheorem{remark}[theorem]{Remark}
\newcommand {\Aut}[1]{{\sf Aut}({#1})}
\newcommand{\cT}{{\mathcal T}}
\newcommand{\cC}{{\mathcal C}}
\newcommand{\cH}{{\mathcal H}}
\newcommand{\cJ}{{\mathcal J}}
\newcommand{\comment}[1]{}
\newcommand{\fS}{\mathfrak S}
\newcommand{\lt}{\lambda} 
\newcommand{\wt}{\operatorname{wt}} 
\newcommand{\supp}{\mathrm{supp}}
\def\X{\mathfrak{X}}
\def\Y{\mathfrak{Y}}
\def\T{\mathcal{T}}
\renewcommand{\subset}{\subseteq}
\renewcommand{\supset}{\supseteq}
\title{Fusions of tensor powers of Johnson schemes}
\author{Sean Eberhard}
\address{Sean Eberhard, Centre for Mathematical Sciences, Wilberforce Road, Cambridge CB3~0WB, UK}
\email{eberhard@maths.cam.ac.uk}
\author{Mikhail Muzychuk}
\address{Mikhail Muzychuk, Department of Mathematics, Ben Gurion University
P.O.B. 653, Beer Sheva 8410501, Israel}
\email{\textbf{muzychuk@bgu.ac.il}}
\thanks{SE has received funding from the European Research Council (ERC) under the European Union's Horizon 2020 research and innovation programme (grant agreement No. 803711).}
\renewcommand{\textcolor}[2]{#2}
\begin{document}

\begin{abstract}
This paper is a follow-up to \cite{E}, in which the first author
studied primitive association schemes lying between a tensor power $\cT_m^d$ of the trivial association scheme and the Hamming scheme $\cH(d,m)$.
A question which arose naturally in that study was whether all primitive fusions of $\cT_m^d$ lie between $\cT_{m^e}^{d/e}$ and $\cH(d/e, m^e)$ for some $e \mid d$.
This note answers this question positively provided that $m$ is large enough.
We similarly classify primitive fusions of the $d$th tensor power of a Johnson scheme on $\binom{m}{k}$ points provided $m$ is large enough in terms of $k$ and $d$.
\end{abstract}

\maketitle

\section{Introduction}

Association schemes are objects of central importance in algebraic combinatorics.
The reader needing an introduction to association schemes could refer to any of \cites{CP,BI,Z,B} or the introduction to \cite{E}.

All our association schemes are symmetric.
If $\X, \Y$ are association schemes on a common vertex set then we write $\X \le \Y$ if $\X$ refines $\Y$ as a partition.
A fusion of an association scheme is a coarsening which is again an association scheme.
\textcolor{blue}{Notice that some authors, for example \cite{KK}, use an opposite order on the set of association schemes. Our choice was motivated by keeping notation consistent with \cite{B,E}. Notice that with this choice of ordering the inclusion $\X \le \Y$ implies a similar inclusion between the automorphism groups $\Aut{\X}\le \Aut{\Y}$.}

We denote the Hamming scheme of order $m^d$ and rank $d+1$ by $\cH(d, m)$
and the Johnson scheme of order $\binom{m}{k}$ and rank $k+1$ by $\cJ(m, k)$.
The special case $\cH(1, m) \cong \cJ(m, 1)$ is the \emph{trivial scheme}, denoted $\cT_m$.
The $d$th tensor power of an association scheme $\X$ is denoted $\X^d$.
The symmetrized $d$th tensor power of the Johnson scheme $\cJ(m, k)$ is called the \emph{Cameron scheme} and denoted $\cC(m, k, d)$.

In this paper we classify primitive fusions of $\cJ(m, k)^d$ assuming $m$ is sufficiently large in terms of $k$ and $d$.

\begin{theorem}
    \label{main-thm}
    \textcolor{blue}{For any positive integers $k,d$ there exists a constant $m_0(k,d)$ such that
    any primitive fusion $\X$ of $\cJ(m, k)^d$ with $m \geq m_0(k, d)$ belongs, up to permuting coordinates, to one of the following intervals:
    \begin{enumerate}
        \item $\cJ(m, k)^d \leq \X \leq \cC(m, k, d)$,  
        \item $\T_{M^e}^{d/e} \leq \X \leq \cH(d/e, M^e)$ for some integer $e \mid d$ where $M=\binom{m}{k}$.
    \end{enumerate}
    }
\end{theorem}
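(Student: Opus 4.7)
The plan is to divide the proof into two stages: a group-theoretic classification of $H := \Aut(\X)$, followed by a scheme-theoretic step reconstructing the interval structure of $\X$.

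For $m$ sufficiently large in terms of $k$, one has $\Aut(\cJ(m,k)) = \Sym(m)$ and hence $\Aut(\cJ(m,k)^d) = \Sym(m)^d =: G$, acting naturally on $V := \binom{[m]}{k}^d$. Any fusion $\X$ satisfies $H \supseteq G$, and primitivity of $\X$ forces $H$ to act primitively on $V$. The heart of the proof is a group-theoretic classification: for $m$ sufficiently large in terms of $k,d$, every primitive supergroup $H$ of $G$ in $\Sym(V)$ is, up to a permutation of coordinates, contained in either $\Sym(m) \wr \Sym(d) = \Aut(\cC(m,k,d))$ (the Cameron case) or $\Sym(M^e) \wr \Sym(d/e) = \Aut(\cH(d/e, M^e))$ for some divisor $e$ of $d$ (the Hamming case). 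I would attack this via an O'Nan--Scott-style analysis: the socle of $G$ is $T := \mathrm{Alt}(m)^d$ (for $m \geq 5$), whose simple factors are permuted by $H$ under conjugation; primitivity forces this action to have orbits of a common size $e \mid d$, and inside each orbit of size $e$ the Johnson structure must collapse into a $\Sym(M^e)$-factor, while the quotient action on $\{1,\ldots,d/e\}$ lies in $\Sym(d/e)$. The threshold $m_0(k,d)$ is required to exclude sporadic primitive overgroups --- for instance, those associated with diagonal embeddings of products of alternating groups, twisted wreath products, or small-$m$ exceptional outer automorphisms such as $\Out(\Sym(6))$.

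The second stage is to pass from this classification of $H$ to the claimed interval for $\X$. The lower bound $\cJ(m,k)^d \leq \X$ is the hypothesis, and in case (2) it automatically implies $\T_{M^e}^{d/e} \leq \X$ once one establishes that the per-block equality structure survives in $\X$ (using the explicit block structure of $H$ together with primitivity). For the upper bound one argues that the partition $\Pi$ of $J^d$ (with $J := \{0,\ldots,k\}$) corresponding to $\X$ refines the orbits of $\Sym(d)$ on $J^d$ in case (1), respectively the Hamming-merged classes in case (2). Although $\Pi$ is only a priori invariant under the image of $H$ in these wreath products, the coherent-algebra axioms --- integrality of intersection numbers and closure of the Bose--Mesner algebra under matrix multiplication --- force any spurious across-orbit merging to fail for $m$ large; this is the main content of the scheme-theoretic part, and should generalise the $k = 1$ analysis in \cite{E}.

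The principal obstacle is the group-theoretic classification of primitive overgroups of $\Sym(m)^d$ in its product Johnson action. While the expected answer is dictated by the natural examples above, rigorously ruling out exotic primitive supergroups through a careful O'Nan--Scott analysis, and verifying the quantitative threshold $m_0(k,d)$, is likely the most technical part of the proof.
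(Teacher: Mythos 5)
The decisive gap is in your second stage, and it means the group-theoretic first stage cannot carry the theorem even if it were available. A fusion of $\cJ(m,k)^d$ is not determined by its automorphism group: fusions need not be schurian, and the motivating paper \cite{E} exhibits infinite families of nonschurian Hamming sandwiches. Knowing $\Aut{\X}\le S_m^{(k)}\wr S_d$ (resp.\ $\Aut{\X}\le S_{M^e}\wr S_{d/e}$) yields no upper bound of the form $\X\le\cC(m,k,d)$ (resp.\ $\X\le\cH(d/e,M^e)$): refinement between two schemes on the same vertex set is not implied by containment of their automorphism groups. In the other direction, the only lower bound automatic from $\Aut{\X}\supseteq (S_m^{(k)})^d$ is the hypothesis $\cJ(m,k)^d\le\X$ itself; to get $\T_{M^e}^{d/e}\le\X$ in case (2) you would need $\Aut{\X}\supseteq (S_{M^e})^{d/e}$, which is a consequence of the theorem, not something the overgroup classification provides (it bounds $H$ from above, not below). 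So the whole statement reduces to showing directly that the fusion partition $\fS$ of $[0,k]^d$ admits no merging outside the two stated patterns --- exactly the step you defer to ``the coherent-algebra axioms force spurious across-orbit merging to fail for $m$ large.'' That deferred step \emph{is} the theorem; the paper proves it by a purely combinatorial analysis of the structure constants as polynomials in $m$ (equality of intersection numbers forces equality of polynomials for $m\ge m_0(k,d)$, leading-term comparison, the sets $D_\beta$ and the counts $N^\beta_\alpha$ of Proposition~\ref{key-prop}, minimal basic sets in the domination order, and connectivity of a minimal constituent coming from primitivity), none of which appears in your outline.

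The first stage is also not sound as sketched, and is in any case the reverse of the paper's logic. Classifying primitive overgroups of $(S_m^{(k)})^d$ in this action is a special case of Cameron's theorem; the paper deliberately deduces its group-theoretic corollary \emph{from} the main theorem in order to avoid the classification of finite simple groups, while the known group-theoretic proofs \cites{cameron,liebeck,maroti} depend on it. Your O'Nan--Scott sketch breaks at its first step: $H$ permutes the simple factors of its \emph{own} socle, not those of $\mathrm{soc}(G)=A_m^d$, and $A_m^d$ is not normal in $H$ precisely in the Hamming case ($H=S_{M^e}\wr S_{d/e}$), so ``the simple factors are permuted by $H$'' fails in the case you most need to detect; ruling out the exotic possibilities there is genuinely hard without CFSG-based overgroup theorems. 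A further unjustified step is ``primitivity of $\X$ forces $H=\Aut{\X}$ to act primitively'': a block system of $H$ is a union of basis relations of $\cJ(m,k)^d$ (being $G$-invariant), but it need not be a union of relations of $\X$, so primitivity of the scheme does not transfer to the group by the argument you gesture at. In short, neither stage is established, and the second cannot be repaired by group theory alone; the combinatorial analysis of the intersection numbers has to be carried out, which is what the paper does.
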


The special cases $k = 1$ and $d = 1$ are worth highlighting individually.
In \cite{M-hamming} it was shown that $\cH(d, m)$ has no nontrivial fusions for $m > 4$. The case $k=1$ of \Cref{main-thm} more generally classifies primitive fusions of $\T_m^d$ (for $m$ sufficiently large).

\begin{cor}
    Let $\X$ be a primitive fusion of $\T_m^d$, where $m \geq m_0(1,d)$.
    Then, up to permuting coordinates, $\T_{m^e}^{d/e} \le \X \le \cH(d/e, m^e)$
    for some integer $e \mid d$.
\end{cor}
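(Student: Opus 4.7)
The corollary is the $k=1$ specialization of \Cref{main-thm}, so my plan is simply to apply that theorem with $k=1$ and observe that the two intervals appearing in its conclusion collapse to the single interval asserted by the corollary. The first step will be to substitute $k=1$: since $\cJ(m, 1) = \T_m$ and $M = \binom{m}{1} = m$, the main theorem, applied at $m \geq m_0(1, d)$, yields that any primitive fusion $\X$ of $\T_m^d$ lies, up to permutation of coordinates, in either
\[
    \T_m^d \le \X \le \cC(m, 1, d)
    \qquad\text{or}\qquad
    \T_{m^e}^{d/e} \le \X \le \cH(d/e, m^e)
\]
for some $e \mid d$.

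The only remaining step will be to identify the first interval with the second at $e = 1$, which requires verifying $\cC(m, 1, d) = \cH(d, m)$. I would argue this directly from the definitions. By construction $\cC(m, 1, d)$ is the fusion of $\T_m^d$ under the $S_d$-action permuting the $d$ coordinates. The relations of $\T_m^d$ are indexed by subsets $S \subseteq [d]$ recording on which coordinates two tuples $\vec{x}, \vec{y} \in [m]^d$ agree. The $S_d$-action identifies any two subsets of the same cardinality, so the relations of $\cC(m, 1, d)$ are indexed by $|S|$, which (after complementation) is precisely the Hamming distance; hence $\cC(m, 1, d) = \cH(d, m)$.

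With this identification, option (1) in the $k = 1$ case reduces to $\T_m^d \le \X \le \cH(d, m)$, which is option (2) with $e = 1$. Every primitive fusion of $\T_m^d$ therefore lies in an interval of the required form, proving the corollary.

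I do not foresee any real obstacle in this derivation once \Cref{main-thm} is granted; the only substantive check is the Cameron--Hamming identification, and that is essentially tautological at $k = 1$ because the trivial scheme $\T_m$ has only one nontrivial relation. All the genuine work lies upstream, in the proof of \Cref{main-thm} itself.
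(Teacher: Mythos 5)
Your proposal is correct and takes essentially the same route as the paper, which states this corollary as an immediate specialization of \Cref{main-thm} at $k=1$, using exactly the identifications $\cJ(m,1)=\T_m$, $M=\binom{m}{1}=m$, and $\cC(m,1,d)=\cH(d,m)$ so that interval (1) is absorbed into interval (2) with $e=1$. Your direct verification that the symmetrized $d$th tensor power of $\T_m$ is the Hamming scheme is the right (and essentially tautological) check.
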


In \cite{M-johnson} it was shown that $\cJ(m, k)$ has no nontrivial fusions for $m \geq 3k+4$.
\textcolor{blue}{This result was improved  to $m\geq 3k-1$ in \cite{U}.}
The case $k = 1$ of \Cref{main-thm} recovers this result, except for the precise lower bound.

\begin{cor}
    Let $\X$ be a fusion of $\cJ(m, k)$, where $m \geq m_0(k,1)$. Then either $\X = \cJ(m, k)$ or $\X$ is trivial.
\end{cor}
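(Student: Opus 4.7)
My plan is to deduce this corollary directly from \Cref{main-thm} specialized to $d = 1$, after one short observation that disposes of the primitivity hypothesis.

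First I would argue that in the relevant range every fusion $\X$ of $\cJ(m,k)$ is automatically primitive. The Johnson scheme $\cJ(m,k)$ is itself primitive whenever $m > 2k$, and every $\X$-relation is a union of $\cJ(m,k)$-relations; hence any nontrivial system of imprimitivity for $\X$ would be inherited by $\cJ(m,k)$, a contradiction. Choosing $m_0(k,1) > 2k$ therefore makes \Cref{main-thm} applicable to any fusion $\X$ of $\cJ(m,k)$.

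Next I would invoke \Cref{main-thm} with $d = 1$ and verify that each of its two alternatives collapses to the desired conclusion. In case (1) we obtain $\cJ(m,k) \leq \X \leq \cC(m,k,1)$; since $\cC(m,k,1)$ is the $S_1$-symmetrization of the first tensor power of $\cJ(m,k)$, it coincides with $\cJ(m,k)$ itself, and $\X = \cJ(m,k)$. In case (2) the divisibility condition $e \mid d = 1$ forces $e = 1$, so the sandwich reduces to $\cT_M \leq \X \leq \cH(1, M)$ with $M = \binom{m}{k}$; as $\cH(1, M) = \cT_M$ is the trivial scheme, $\X$ is trivial.

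The corollary is thus a bookkeeping exercise built on top of \Cref{main-thm}, and I anticipate no real obstacle within the present argument; every piece of genuine difficulty is absorbed into the proof of \Cref{main-thm} itself.
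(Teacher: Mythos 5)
Your proposal is correct and follows essentially the same route as the paper, which obtains this corollary by specializing \Cref{main-thm} to $d=1$, where case (1) collapses to $\X=\cJ(m,k)$ since $\cC(m,k,1)=\cJ(m,k)$ and case (2) collapses to the trivial scheme since $e=1$ and $\cH(1,M)=\cT_M$. Your preliminary observation that any fusion of $\cJ(m,k)$ is automatically primitive for $m>2k$ (because a nontrivial imprimitivity equivalence for the fusion would be a union of Johnson basic relations, contradicting primitivity of $\cJ(m,k)$) correctly supplies the primitivity hypothesis of \Cref{main-thm}, a point the paper leaves implicit.
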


Association schemes of the type appearing in the conclusion of \Cref{main-thm} are studied in \cite{E}, where they are called ``Cameron sandwiches'' and ``Hamming sandwiches'', respectively.
The main result of \cite{E} is that there are infinite families of nonschurian Hamming sandwiches.

\begin{remark}
Imprimitive fusions of $\cJ(m, k)^d$ are not so easily classified.
Certainly one must allow arbitrary tensor products of the cases appearing in \Cref{main-thm}, but there are still many others.
For example, the imprimitive wreath product $\T_m \wr \T_m$ (see \cite{CP}*{Section~3.4.1}) is an imprimitive fusion of $\T_m^2$ not fitting this description.
\end{remark}

As an application we give an elementary classification of primitive groups containing $(A_m^{(k)})^d$.
Here $A_m^{(k)}$ denotes the image of the alternating group $A_m$ in its permutation action on $k$-sets,
and below $S_m^{(k)}$ is defined similarly.

\textcolor{blue}{The statement below is a special case of Cameron's theorem~\cites{cameron,liebeck,maroti},
which more generally classifies all large primitive permutation groups.
However, while the proof of Cameron's theorem depends on the classification of finite simple groups, our proof does not.} 
\begin{cor}
    \label{cor:cameron}
    Let $G \le S_n$ be a primitive permutation group containing $(A_m^{(k)})^d$,
    where $n = \binom{m}{k}^d$ and $m \ge m_0(k, d)$.
    Then either
    \begin{enumerate}
        \item $(A_m^{(k)})^d \le G \le S_m^{(k)} \wr S_d$ or
        \item $(A_{M^e})^{d/e} \le G \le S_{M^e} \wr S_{d/e}$ for some integer $e \mid d$ where $M = \binom{m}{k}$.
    \end{enumerate}
\end{cor}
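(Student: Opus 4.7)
The strategy is to pass through the $2$-orbital scheme. Let $\X = \Orb_2(G)$ be the association scheme whose relations are the $G$-orbits on ordered pairs; since $G$ is primitive, $\X$ is primitive. For $m \ge 2k+2$ the groups $A_m^{(k)}$ and $S_m^{(k)}$ share the same $2$-orbits on $\binom{[m]}{k}$ --- namely the relations of $\cJ(m,k)$ --- because the $S_m$-stabiliser of an ordered pair of $k$-sets $(S_1,S_2)$ acts on the complement $[m]\setminus(S_1\cup S_2)$ and thus meets $A_m$ nontrivially whenever $m-|S_1\cup S_2|\ge 2$. Consequently $\cJ(m,k)^d\le\X$, and provided $m\ge m_0(k,d)$, \Cref{main-thm} places $\X$ (after a suitable coordinate permutation) into one of the two listed intervals.

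In either case the upper bound on $G$ is immediate from $G\le\Aut{\X}$ together with the monotonicity of automorphism groups under refinement of schemes. In Case~(1) one obtains $G\le\Aut{\X}\le\Aut{\cC(m,k,d)}=S_m^{(k)}\wr S_d$; in Case~(2), $G\le\Aut{\X}\le\Aut{\cH(d/e,M^e)}=S_{M^e}\wr S_{d/e}$. In Case~(1) the lower bound $(A_m^{(k)})^d\le G$ is just the hypothesis, so that case is complete.

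The bulk of the work is to establish $(A_{M^e})^{d/e}\le G$ in Case~(2). Since $G$ is primitive on $[M^e]^{d/e}$ via the product action of $S_{M^e}\wr S_{d/e}$, a standard property of primitive wreath actions implies that the projection $H$ of the base subgroup $G\cap(S_{M^e})^{d/e}$ onto a single coordinate is a primitive subgroup of $S_{M^e}$. The hypothesis $(A_m^{(k)})^d\le G$ ensures that $H$ contains the natural image of $(A_m^{(k)})^e\hookrightarrow S_M^e\hookrightarrow S_{M^e}$. Taking a $3$-cycle $\sigma\in A_m$, its action on $\binom{[m]}{k}$ has support of size $O(km^{k-1})$, so the element $(\sigma,1,\ldots,1)\in(A_m^{(k)})^e$ moves only an $O(k/m)$ fraction of the $M^e$ points of $[M^e]$. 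For $m$ large enough this is below the minimal-degree threshold supplied by a classical (CFSG-free) theorem of Jordan/Bochert on primitive groups, forcing $A_{M^e}\le H$. Finally, because $G$ contains the \emph{honest} direct product $((A_m^{(k)})^e)^{d/e}$ (and not merely a diagonal copy), a Goursat-style argument applied to subdirect products of the simple group $A_{M^e}$ upgrades the coordinate-wise bound to $(A_{M^e})^{d/e}\le G$.

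The principal obstacle is Case~(2), where one must combine a quantitative CFSG-free Jordan/Bochert theorem with a Goursat-style analysis of subdirect products, being careful to verify both that each coordinate projection is primitive on $[M^e]$ and that $G$ genuinely realizes a non-diagonal direct product. The remaining steps are essentially bookkeeping via the standard dictionary between orbital schemes and permutation groups.
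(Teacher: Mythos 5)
Your first step (the orbital scheme of $G$ is a fusion of $\cJ(m,k)^d$, then apply \Cref{main-thm}) is exactly the paper's, and your Case~(1) is fine modulo the unproved assertion $\Aut{\cC(m,k,d)} = S_m^{(k)} \wr S_d$, which is true for large $m$ but itself requires a citation. The genuine gap is your Case~(2) lower bound. First, a repairable point: the projection of $G \cap (S_{M^e})^{d/e}$ onto a coordinate need not be primitive; the standard lemma gives primitivity of the \emph{component}, i.e.\ the projection of the stabilizer of a coordinate, and your $H$ is only a nontrivial normal (hence transitive) subgroup of that. The fatal point is the ``classical CFSG-free Jordan/Bochert theorem'' you invoke: no such theorem exists at the threshold you need. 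Bochert's linear minimal-degree bound requires $2$-transitivity, which you do not have for $H$; Jordan's classical theorems require a cycle of prime length with fixed points; and for general primitive groups the best CFSG-free minimal-degree bound (Babai) is of order $\tfrac12(\sqrt{N}-1)$ with $N = M^e$, which is essentially sharp because of Johnson groups. Your element $(\sigma,1,\dots,1)$ moves roughly $(3k/m)N$ points, and $(3k/m)N < \tfrac12(\sqrt{N}-1)$ would force $\sqrt{N} = \binom{m}{k}^{e/2} \lesssim m/(6k)$, which fails whenever $ke \ge 2$ (and when $k = e = 1$ Case~(2) is already the hypothesis). So the small-support element is far above every CFSG-free threshold; the theorems that do operate at a linear-fraction threshold for primitive groups (Liebeck--Saxl, Guralnick--Magaard) depend on CFSG --- defeating the stated purpose of the corollary --- and even they would only identify $H$ as a ``standard'' group, after which more work is needed to get $A_{M^e} \le H$. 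Your concluding Goursat/subdirect-product step is fine once each coordinate projection contains $A_{M^e}$, but that input is exactly what is missing.

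The paper avoids this group theory entirely: in either case of \Cref{main-thm} the orbital scheme of $G$ has a constituent graph equal to a Cameron graph (Hamming graphs being special Cameron graphs), and \cite{wilmes-thesis}*{Theorem~8.2.1} --- a CFSG-free result about primitive groups with such an orbital --- yields either the desired two-sided sandwich $(A_m^{(k)})^d \le G \le S_m^{(k)} \wr S_d$ (respectively $(A_{M^e})^{d/e} \le G \le S_{M^e} \wr S_{d/e}$) or the bound $|G| \le \exp(c(\log n)^3)$; the latter is impossible since $|G| \ge m!^d$ while $n \le m^{kd}$ and $m \ge m_0(k,d)$. If you want to keep your argument CFSG-free you will need to replace your Jordan/Bochert step by an appeal to a combinatorial theorem of this kind; as written, that step is a gap that cannot be closed with the classical tools you name.
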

\begin{proof}
\textcolor{blue}{ Let $\X$ be the orbital scheme of $G$. It follows from $(A_m^{(k)})^d\leq G$ that $\X$ is a fusion of the orbital scheme of $(A_m^{(k)})^d$, which coincides with $\cJ(m,k)^d$.
Thus $\cJ(m,k)^d\leq \X$ and, by~\Cref{main-thm}, either $\cJ(m,k)^d\leq \X \leq \cC(m,k,d)$ or $\T_{M^e}^{d/e} \leq \X \leq \cH(d/e, M^e)$.
In the first case, $\X$ has a constituent graph equal to the Cameron graph $C(m, k, d)$.
In the second case, $\X$ has a consituent graph equal to the Hamming graph $H(d/e, M^e)$ (which is a special case of a Cameron graph).
Applying \cite{wilmes-thesis}*{Theorem~8.2.1}, either the claimed conclusion holds or $G$ is small: $|G| \le \exp(c (\log n)^3)$.
Since $|G| \ge |(A_m^{(k)})^d| = m!^d$ and $n = \binom{m}{k}^d \le m^{kd}$,
we get $d \log m! \le c (\log n)^3 \le c (kd \log m)^3$, in contradiction to the hypothesis $m \ge m_0(k, d)$.
}
\end{proof}

\section{Notation}

Let $[0,k]^d = \{0,\dots,k\}^d$.
We use the following notation for vectors $a,b,c \in [0,k]^d$:
\begin{align*}
    &a \leq b \iff a_i \leq b_i ~ \text{for all} ~ i ~ \textcolor{blue}{(\text{in this case we say that } b ~ \text{\emph{dominates}} ~ a)}, &\\
    &|a - b| = (|a_1 - b_1|, \dots, |a_d - b_d|),\\
    &\min(a, b) = (\min(a_1, b_1), \dots, \min(a_d, b_d)),\\
    &\max(a, b) = (\max(a_1, b_1), \dots, \max(a_d, b_d)),\\
    &a! = a_1! \cdots a_d!,\\
    &\binom{a}{b} = \binom{a_1}{b_1} \cdots \binom{a_d}{b_d},\\
    &\wt(a) = a_1 + \cdots + a_d, \\
    &\supp(a) = \{ i : a_i > 0\}, \\
    &[a] = \{ b \in [0,k]^d : b \leq a\}, \\
    &(x)^d = (x, \dots, x), \\
    &e_i = (0, \dots, 0, 1, 0, \dots, 0).
\end{align*}
We understand $\binom{a}{b}$ to be zero unless $(0)^d \le b \le a$.
We call $\wt(a)$ the \emph{weight} of $a$ and $\supp(a)$ the \emph{support} of $a$.

If $p$ is a polynomial in one variable we write $\deg(p)$ for its degree and $\lt(p)$ for its leading term.

\section{Proof}

The structure constants of $\cJ(m,k)$ are given by
\[
    p_{b,c}^a(m) = \sum_{i} \binom{k-a}{i}\binom{a}{k-b-i}\binom{a}{k-c-i} \binom{m-k-a}{b+c+i-k}
\]
(as in \cites{E,KK}).
Here $0 \leq a, b, c \leq k$, and $i$ can be restricted to the range
\[
    \max(0,k-a-b,k-b-c,k-a-c) \leq i \leq \min(k-a,k-b,k-c, m-a-b-c).
\]
In the following we always assume $m \geq 3k$. Under this assumption we have $p_{b,c}^a(m) > 0$ if and only if $a,b,c$ satisfy the triangle inequalities \cite{E}*{Lemma~4.1}.
More precisely we have the following.

\begin{lemma}
    We have $p^a_{b, c}(m) > 0$ if and only if $|b - c| \leq a \leq b + c$. 
    Assuming $0 \leq b - c \leq a \leq b + c$, the leading term of $p^a_{b,c}(m)$ is
    \[
        \lt(p_{b,c}^a(m)) = \begin{cases}
        \binom{a}{b}\binom{a}{c}\frac{1}{(b+c - a)!} m^{b+c-a} &: a \geq b,\\
        \binom{k-a}{k-b}\binom{a}{b-c}\frac{1}{c!} m^{c} &: a\leq b.
        \end{cases}
    \]
In particular, $\deg(p_{b,c}^a(m))\leq \min(b,c)$, and equality holds if and only if $a \leq \max(b,c)$.
\end{lemma}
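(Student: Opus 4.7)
The plan is to exploit the fact that in each summand only $\binom{m-k-a}{b+c+i-k}$ depends on $m$; it is a polynomial in $m$ of degree $b+c+i-k$ with positive leading coefficient $1/(b+c+i-k)!$ when $b+c+i-k \geq 0$, and zero otherwise. The other three factors $\binom{k-a}{i}$, $\binom{a}{k-b-i}$, $\binom{a}{k-c-i}$ are nonnegative integers independent of $m$. Hence $p^a_{b,c}(m)$ is a sum of nonnegative polynomials in $m$, and both the positivity and the leading-term questions reduce to identifying which $i$ give nonzero summands and which give the maximum $m$-degree.

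First I would pin down the admissible range of $i$. Each of the four binomials imposes a constraint, and combining them yields exactly the range
\[
    \max(0, k-a-b, k-a-c, k-b-c) \leq i \leq \min(k-a, k-b, k-c)
\]
stated in the preamble, with $i \geq k-b-c$ coming from $\binom{m-k-a}{b+c+i-k}\neq 0$ (the companion upper bound $b+c+i-k \leq m-k-a$ is automatic once $m \geq 3k$). Comparing each of the four lower bounds to each of the three upper bounds, all twelve resulting inequalities are automatic except three, namely $c \leq a+b$, $b \leq a+c$, $a \leq b+c$. So the interval is nonempty iff the triangle inequalities hold, and since any single nonzero summand is strictly positive, this establishes the positivity assertion. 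The main bookkeeping step of the whole argument lies here; once this characterization is in hand, the leading-term formula drops out by direct substitution.

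For the leading term, by the symmetry $p^a_{b,c} = p^a_{c,b}$ I may assume $b \geq c$, in which case the maximum admissible $i$ is $\min(k-a, k-b)$. If $a \geq b$, this maximum is $k-a$, so a single summand of top $m$-degree $b+c-a$ contributes; after simplifying $\binom{a}{a-b}\binom{a}{a-c} = \binom{a}{b}\binom{a}{c}$ it has leading term $\binom{a}{b}\binom{a}{c} m^{b+c-a}/(b+c-a)!$. If $a \leq b$, the maximum is $k-b$, giving top $m$-degree $c$ and, using $\binom{a}{0}=1$, leading term $\binom{k-a}{k-b}\binom{a}{b-c} m^c/c!$. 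The triangle inequalities make the relevant binomials strictly positive in each case, so no cancellation with lower-$i$ summands is possible, and the two formulas agree at the boundary $a=b$ (both reduce to $\binom{a}{c}m^c/c!$). The degree bound $\deg(p^a_{b,c}(m)) \leq c = \min(b,c)$ follows, with equality precisely when we are in the second case, i.e.\ $a \leq b = \max(b,c)$.
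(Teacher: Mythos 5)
Your proof is correct and is exactly the direct computation the paper has in mind: the paper gives no written proof of this lemma, citing Lemma~4.1 of \cite{E} for the positivity claim and treating the leading-term formula as an immediate consequence of the displayed expression for $p^a_{b,c}(m)$. Your identification of the admissible range of $i$ (nonempty precisely under the three triangle inequalities, with the bound $i \le m-a-b-c$ redundant for $m \ge 3k$), the observation that the unique summand of maximal $m$-degree is the one at $i = k-\max(a,b)$ when $b \ge c$ (so no cancellation can occur, all leading coefficients being positive), and the consistency check at the boundary $a=b$ all verify correctly, so your argument simply fills in what the paper leaves to the reader.
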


Now consider $\cJ(m, k)^d$.
For $a, b, c \in [0,k]^d$, let
\[
    p^a_{b,c}(m) = \prod_{i=1}^d p^{a_i}_{b_i,c_i}(m).
\]
These are the structure constants of $\cJ(m, k)^d$.
The following lemma generalizes the previous one.

\begin{lemma}
    Let $a,b,c\in[0,k]^d$.
    Then $p^a_{b,c}(m) > 0$ if and only if $|b-c| \leq a \leq b + c$ 
    Moreover $\deg(p_{b,c}^a(m)) \leq \wt(\min(b, c))$, with equality if and only if $a \leq \max(b, c)$.
    If $\deg(p_{b,c}^a(m)) = \wt(b) = \wt(c)$ then $a \leq b = c$ and the leading term of $p^a_{b,c}(m)$ is
    \[
        \lambda(p^a_{b,c}(m))
        = \binom{(k)^d - a}{(k)^d - b} \frac{1}{b!} m^{\wt(b)}.
    \]
\end{lemma}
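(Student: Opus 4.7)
The plan is to derive everything from the preceding one-dimensional lemma by exploiting the fact that the structure constants of $\cJ(m,k)^d$ factor as $p^a_{b,c}(m) = \prod_{i=1}^d p^{a_i}_{b_i,c_i}(m)$. Positivity is immediate: for $m \geq 3k$ each factor is a non-negative integer, so the product is positive iff each factor is positive, and by the previous lemma this is equivalent to $|b_i - c_i| \leq a_i \leq b_i + c_i$ for every $i$, i.e.\ $|b-c| \leq a \leq b+c$ in the coordinatewise order.

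For the degree bound, I would use additivity of degree for products of polynomials in $m$: since each factor is either identically zero or has positive leading coefficient, $\deg(p^a_{b,c}(m)) = \sum_i \deg(p^{a_i}_{b_i,c_i}(m))$. The one-dimensional lemma bounds each summand by $\min(b_i, c_i)$ with equality iff $a_i \leq \max(b_i, c_i)$, and summing gives $\deg(p^a_{b,c}(m)) \leq \wt(\min(b,c))$ with equality iff $a \leq \max(b,c)$. To obtain the conclusion $a \leq b = c$ under the hypothesis $\deg(p^a_{b,c}(m)) = \wt(b) = \wt(c)$, I observe that $\wt(\min(b,c)) = \wt(b)$ combined with the coordinatewise inequality $\min(b,c) \leq b$ forces $\min(b,c) = b$, i.e.\ $b \leq c$; by symmetry $c \leq b$, so $b = c$, and then $a \leq \max(b,c) = b$ follows from the equality case just established.

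For the leading-term formula, under the hypothesis $a \leq b = c$ each coordinate falls into the second case ($a_i \leq b_i$) of the one-dimensional lemma, so that
\[
\lt(p^{a_i}_{b_i, c_i}(m)) = \binom{k - a_i}{k - b_i}\binom{a_i}{b_i - c_i}\frac{1}{c_i!}\, m^{c_i} = \binom{k - a_i}{k - b_i}\frac{1}{b_i!}\, m^{b_i},
\]
using $b_i = c_i$. Taking the product over $i$ yields $\binom{(k)^d - a}{(k)^d - b}\frac{1}{b!}\, m^{\wt(b)}$, as claimed. There is no real conceptual obstacle here; the proof is bookkeeping, and the only mild care needed is to identify the correct case of the one-dimensional formula in each coordinate and to verify that the combinatorial coefficients assemble into the claimed multi-index binomial.
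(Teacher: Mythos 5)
Your proposal is correct and follows essentially the same route as the paper: factor the structure constant over coordinates, use additivity of degree and the one-dimensional lemma for the positivity and degree statements, deduce $b=c$ and $a\leq b$ from the weight equalities, and multiply the coordinatewise leading terms (with $\binom{a_i}{b_i-c_i}=1$ when $b_i=c_i$) to get the claimed formula. No gaps; the paper's own proof is just a more compressed version of the same bookkeeping.
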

\begin{proof}
    It follows from $p_{b,c}^a(m) = \prod_{i=1}^d p_{b_i c_i}^{a_i}(m)$ that $p_{b,c}^a(m)>0$ iff each triple $a_i,b_i,c_i$ satisfies the triangle condition $|b_i-c_i|\leq a_i \leq b_i+c_i$. In this case
    \[
        \deg(p_{b,c}^a(m)) = \sum_{i=1}^d \deg(p_{b_i, c_i}^{a_i}(m))\leq \sum_{i=1}^d\min(b_i,c_i) = \wt(\min(b, c)).
    \]
    Equality holds if and only if $a_i \leq \max(b_i, c_i)$ for all $i$.
    
    If $\deg(p_{b,c}^a(m)) = \wt(b) = \wt(c)$ then $\wt(\min(b, c)) = \wt(b) = \wt(c)$, which implies $b = c$, and moreover we have seen that we must have $a \leq b$.
    Multiplying the leading terms of $p^{a_i}_{b_i,c_i}(m)$ given by the previous lemma, we get the claimed formula.
\end{proof}

Let $\X$ be a fusion of $\cJ(m, k)^d$.
Since $\X$ is a coarsening of $\cJ(m, k)^d$ there is a partition $\fS$ of $[0,k]^d$ such that $\X = \{R_\alpha : \alpha \in \fS\}$, where
\[
    (u, v) \in R_\alpha \iff (|u_1 \setminus v_1|, \dots, |u_d \setminus v_d|) \in \alpha.
\]
In this situation we write $\X = \cJ(m, k)^\fS$ \cite{E}*{Section~4}.
For $\beta, \gamma \in \fS$ and $a \in [0,k]^d$ define
\[
    p^a_{\beta,\gamma}(m) = \sum_{b\in \beta, c \in \gamma} p^a_{b,c}(m).
\]
For $\X = \cJ(m,k)^\fS$ to be an association scheme, $\fS$ must satisfy two conditions:
\begin{enumerate}
    \item $\{(0)^d\} \in \fS$,
    \item $p^a_{\beta,\gamma}(m) = p^{a'}_{\beta,\gamma}(m)$ for all $\alpha,\beta,\gamma \in \fS$ and $a,a' \in \alpha$.
\end{enumerate}
We may denote the common value of $p^a_{\beta,\gamma}(m) ~ (a\in\alpha)$ by $p^\alpha_{\beta,\gamma}(m)$; these are the structure constants of $\X$.

We call the sets $\alpha \in \fS$ the \emph{basic $\fS$-sets};
their unions are called \emph{$\fS$-sets}.
For nonempty $S \subseteq [0,k]^d$ let $\wt(S) = \max\{\wt(b) \mid b \in S\}$.
For $\alpha \in \fS$ let $\alpha^*$ be the set of $a \in \alpha$ of maximal weight.
Let $D_\alpha = \bigcup_{a \in \alpha^*} [a]$. Note that $\alpha^*\subseteq D_\alpha$.

\textcolor{blue}{For any $\alpha,\beta\subseteq [0,k]^d$ and $a\in [0,k]^d$ the structure constant $p_{\alpha,\beta}^a(m)$ is a real polynomial in $m$ the coefficients of which depend on $\alpha,\beta$ and $a$. For every pair of distinct real polynomials $f,g\in \mathbb{R}[x]$ there exists a real number $c_{f,g}\in\mathbb{R}$ such that $f(x)\neq g(x)$ holds for all $x > c_{f,g}$. Therefore, there exists a constant $m_0(k,d)$ such that, provided $m \geq m_0(k, d)$, the following condition holds for all $\alpha,\beta\subset [0,k]^d$ and $a,b\in [0,k]^d$:
\begin{equation}\label{poly}
    p_{\alpha,\beta}^a(m) = p_{\alpha,\beta}^b(m)
    \implies p_{\alpha,\beta}^a(m) = p_{\alpha,\beta}^b(m)\text{ as polynomials in } m
    \implies \lambda(p_{\alpha,\beta}^a(m)) = \lambda(p_{\alpha,\beta}^b(m)).
\end{equation}}

\textcolor{blue}{In what follows we assume that $m \geq m_0(k, d)$ and hence $p^a_{\beta,\gamma}(m) = p^{a'}_{\beta,\gamma}(m)$ only if $p^a_{\beta,\gamma}(m)$ and $p^{a'}_{\beta,\gamma}(m)$ are equal as polynomials in $m$.}

\begin{prop}\label{key-prop}
    Let $\beta \in \fS$ be a basic $\fS$-set.
    \begin{enumerate}
        \item $D_\beta$ is an $\fS$-set, and $\wt(D_\beta \setminus \beta) < \wt(\beta)$.
        \item For every basic set $\alpha \in \fS$ there is a constant $N^\beta_\alpha$ such that
        \[
            \sum_{b : a \leq b \in \beta^*} \binom{(k)^d - a}{(k)^d - b} = N^\beta_\alpha \qquad (a \in \alpha).
        \]
        \item Every element of $\beta$ is dominated by a unique element of $\beta^*$.
        \item Either $\wt(\beta) = \wt(D_\beta \setminus \beta) + 1$ or $\beta^* \subset \{0, k\}^d$.
    \end{enumerate}
\end{prop}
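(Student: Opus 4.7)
The plan is to extract all four parts from the leading term (in $m$) of the structure constant $p^a_{\beta,\beta}(m)$, whose constancy on the basic $\fS$-set containing $a$ is guaranteed by the choice of $m_0(k,d)$. Using the previous lemma, the only pairs $(b,c)$ with $b,c \in \beta$ that can contribute to the $m^{\wt(\beta)}$ coefficient are those with $b = c \in \beta^*$ and $a \leq b$, so this leading coefficient equals
\[
    L^\beta(a) \;:=\; \sum_{b \in \beta^*,\, a \leq b} \binom{(k)^d - a}{(k)^d - b}\frac{1}{b!}.
\]

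For (1) I would observe that $L^\beta(a) \ne 0$ iff $a \in D_\beta$, so membership in $D_\beta$ is detected by the degree of $p^a_{\beta,\beta}(m)$ as a polynomial in $m$. Since this degree is constant on basic $\fS$-sets, $D_\beta$ is a union of basic $\fS$-sets. The bound $\wt(D_\beta \setminus \beta) < \wt(\beta)$ is automatic: any $b \in D_\beta$ of weight $\wt(\beta)$ must equal the element of $\beta^*$ dominating it (same weight under domination forces equality), and so lies in $\beta$.

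For (2), the crucial intermediate step — and the one I expect to be the main subtle point — is showing that $b!$ is constant as $b$ ranges over $\beta^*$. Taking $a = b^* \in \beta^* \subseteq \beta$, only the term $b = b^*$ appears in $L^\beta(b^*)$, giving $L^\beta(b^*) = 1/b^*!$. Constancy of $L^\beta(\cdot)$ on the basic set $\beta$ then forces $b^*!$ to take a common value $B$ as $b^*$ varies in $\beta^*$. Factoring $1/B$ out of $L^\beta(a)$ shows that $\sum_{b \in \beta^*,\, a \leq b}\binom{(k)^d - a}{(k)^d - b}$ is constant on each basic $\fS$-set $\alpha$, proving (2); the $a = b^*$ computation also yields $N^\beta_\beta = 1$.

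Statements (3) and (4) should then follow quickly. For (3), applying (2) with $a = b \in \beta$ gives $\sum_{c \in \beta^*,\, b \leq c}\binom{(k)^d - b}{(k)^d - c} = 1$, a sum of positive integers (since each $b \le c$ makes the binomial a positive integer), so exactly one $c \in \beta^*$ dominates $b$; the corresponding binomial equals $1$, which forces $c_j \in \{b_j, k\}$ in every coordinate $j$. For (4) I would consider $b^* \in \beta^*$ with $0 < b^*_i < k$ for some $i$, and set $b := b^* - e_i \in D_\beta$, of weight $\wt(\beta) - 1$. If $b$ were in $\beta$, then the uniqueness in (3) would force the dominator of $b$ in $\beta^*$ to be $b^*$, yet $c_i = b^*_i \notin \{b_i, k\} = \{b^*_i - 1, k\}$, a contradiction. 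Hence $b \in D_\beta \setminus \beta$ and $\wt(D_\beta \setminus \beta) \ge \wt(\beta) - 1$, which combined with part (1) yields equality.
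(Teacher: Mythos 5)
Your proposal is correct and follows essentially the same route as the paper: constancy (as polynomials in $m$, via the choice of $m_0$) of the leading term of $p^a_{\beta,\beta}(m)$ gives (1), the observation that $b!$ is constant on $\beta^*$ gives (2), $N^\beta_\beta=1$ plus positivity of the binomials gives (3), and the $b^*-e_i$ argument gives (4). Your derivation of (4) via ``the unique dominating binomial equals $1$, forcing $c_j\in\{b_j,k\}$'' is just a mild repackaging of the paper's computation $\binom{k-b_i+1}{k-b_i}=1$, so there is no substantive difference.
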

\begin{proof}
    Let $w = \wt(\beta)$ and $D = D_\beta$.
    Since the polynomials $p^a_{b,c}(m)$ have positive leading coefficient whenever they are nonzero,
    the degree of $p^a_{\beta,\beta}(m)$ is, by the previous lemma,
    \[
        \deg(p^a_{\beta,\beta}(m)) = \max_{b, c \in \beta} \deg(p^a_{b,c}(m)) \le \max_{b, c \in \beta} \wt(\min(b, c)) \le w,
    \]
    with equality holding if and only if $a \leq b = c \in \beta^*$, i.e., if and only if $a \in D$.
    Since $p^a_{\beta,\beta}(m)$ should depend only on the cell of $\fS$ containing $a$,
    it follows that $D$ is an $\fS$-set.
    Since $\beta$ is basic, $\beta \subset D$, and since $\beta \supset \beta^*$ we have $\wt(D \setminus \beta) < \wt(\beta)$.
    
    Moreover if $a \in D$ then the leading term of $p^a_{\beta,\beta}$ is
    \[
        \lambda(p^a_{\beta,\beta}(m)) = \sum_{b : a \leq b \in \beta^*} \binom{(k)^d - a}{(k)^d - b} \frac1{b!} m^w,
    \]
    and again this should depend only on the cell $\alpha$ containing $a$.
    \textcolor{blue}{Taking $\alpha = \beta$ and $a\in \beta^*$, it follows that $a!$ is a constant for $a \in \beta^*$.}
    Hence (2) holds (if $\alpha$ is not contained in $D$ then $N^\beta_\alpha = 0$).
    
    Next apply (2) with $\alpha = \beta$.
    Taking $a \in \beta^*$ shows $N^\beta_\beta = 1$, so (3) holds.
    
    Finally let $b \in \beta^*$ and suppose $a = b - e_i \ge 0$.
    If $a \in \beta$ then we get
    \[
        \binom{k-b_i+1}{k-b_i} = N^\beta_\beta = 1,
    \]
    so $b_i = k$.
    Hence either $\wt(b) = \wt(\beta)+1$ or $b_i \in \{0, k\}$ for all $i$, which implies (4).
\end{proof}

We can define a partial ordering on $\fS$ by saying $\alpha \preceq \beta$ if every $a \in \alpha$ is dominated by some $b \in \beta$.
By part (3) of the proposition, this is equivalent to $N^\beta_\alpha > 0$.
It is obvious that $\preceq$ is reflexive and transitive on $\fS$.
To verify antisymmetry, note that if $\alpha \preceq \beta \preceq \alpha$ and $a \in \alpha^*$ then there are $b \in \beta$ and $a' \in \alpha$ such that $a \le b \le a'$, which by maximality of $a$ implies $a = b = a'$ and hence $\alpha = \beta$ since $\fS$ is a partition.
We say $\alpha \in \fS$ is \emph{minimal} if it is minimal in $(\fS \setminus \{(0)^d\}, \preceq)$.

\begin{cor}
    Let $\alpha \in \fS$ be a minimal basic set.
    Then $D_\alpha = \alpha \cup \{(0)^d\}$.
    Moreover the elements of $\alpha^*$ have disjoint equal-sized supports,
    and either $\wt(\alpha) = 1$ or $\alpha^* \subset \{0, k\}^d$.
\end{cor}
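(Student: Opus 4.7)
The plan is to deduce each of the three claims of the corollary from the preceding \Cref{key-prop}, relying almost entirely on parts (1), (3), (4) together with the minimality of $\alpha$ in $(\fS \setminus \{(0)^d\}, \preceq)$.

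First, to establish $D_\alpha = \alpha \cup \{(0)^d\}$, I would argue as follows. By part (1) of \Cref{key-prop}, $D_\alpha$ is an $\fS$-set, so it is a union of basic $\fS$-sets. Every element of $D_\alpha$ is by definition dominated by some element of $\alpha^* \subset \alpha$, so any basic set $\gamma \subset D_\alpha$ satisfies $\gamma \preceq \alpha$ in the partial ordering on $\fS$. If $\gamma \ne \alpha$ then minimality of $\alpha$ forces $\gamma = \{(0)^d\}$. Since $(0)^d$ is dominated by every $b \in \alpha^*$ and so lies in $D_\alpha$, this gives $D_\alpha \setminus \alpha = \{(0)^d\}$.

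Second, to show that distinct $b, b' \in \alpha^*$ have disjoint supports, I would consider $a := \min(b, b')$. If $\supp(b) \cap \supp(b') \ne \emptyset$ then $a \ne (0)^d$; moreover $a \le b$ so $a \in D_\alpha$, and the first part forces $a \in \alpha$. But then $a$ is dominated by both $b$ and $b'$, two distinct elements of $\alpha^*$, contradicting the uniqueness of dominating element from part (3) of \Cref{key-prop}.

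Third, for the dichotomy and equal-sized supports I would simply apply part (4) to $\beta = \alpha$. Since $D_\alpha \setminus \alpha = \{(0)^d\}$ has weight $0$, the first alternative $\wt(\alpha) = \wt(D_\alpha \setminus \alpha) + 1$ reads $\wt(\alpha) = 1$, in which case each $b \in \alpha^*$ is of the form $e_i$ and $|\supp(b)| = 1$. In the other case $\alpha^* \subset \{0,k\}^d$ every $b \in \alpha^*$ satisfies $\wt(b) = k \cdot |\supp(b)|$; since all elements of $\alpha^*$ share the common weight $\wt(\alpha)$, they all have support size $\wt(\alpha)/k$. Either way, the supports of the elements of $\alpha^*$ have equal size.

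I do not anticipate any real obstacle: all three claims reduce to short bookkeeping on top of \Cref{key-prop}, and the only minor subtlety is checking in the second paragraph that the witness $a = \min(b, b')$ is nonzero (which is exactly what the assumption of overlapping supports provides) so that it lies in $\alpha$ rather than in $\{(0)^d\}$.
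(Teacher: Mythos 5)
Your proof is correct and follows essentially the same route as the paper: use minimality of $\alpha$ to show every basic subset of $D_\alpha$ other than $\alpha$ is $\{(0)^d\}$, then invoke parts (3) and (4) of \Cref{key-prop}. The only cosmetic difference is that for disjointness of supports you apply the uniqueness in part (3) to $\min(b,b')$, whereas the paper applies it to $e_i$ for $i$ in a common support coordinate; both witnesses lie in $\alpha$ by the first part, so the arguments are interchangeable.
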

\begin{proof}
    Apply the proposition.
    Note that if $\beta$ is a basic subset of $D_\alpha \setminus \alpha$ then $\beta \prec \alpha$.
    By minimality of $\alpha$ this implies $\beta = \{(0)^d\}$.
    Hence $D_\alpha = \alpha \cup \{(0)^d\}$.
    Next,
    for any $a \in \alpha^*$ and $i \in \supp(a)$ we have $e_i \in D_\alpha$.
    By part (3) of \Cref{key-prop}, $a$ is the unique element of $\alpha^*$ dominating $e_i$.
    Therefore the elements of $\alpha^*$ have disjoint supports.
    By part (4), either $\wt(\alpha) = 1$ or $\alpha^* \subset \{0, k\}^d$.
    If $\wt(\alpha) = 1$ then all $a \in \alpha^*$ have singleton support,
    and otherwise $|\supp(a)| = \wt(a) / k = \wt(\alpha) / k$ for all $a \in \alpha^*$.
\end{proof}

Until now $\X$ could be imprimitive.
Now we specialize to the primitive case to complete the proof of \Cref{main-thm}.
Let $\alpha \in \fS$ be minimal.
If $\X$ is primitive then $R_\alpha$ must be connected,
which implies that $\alpha^*$ covers $\{1, \dots, d\}$.
Hence $\alpha^*$ is an equipartition of $\{1, \dots ,d\}$.
Let $w = \wt(\alpha)$.
If $w = 1$ then $\alpha^*$ must be the set of elements of weight $1$, so $R_\alpha$ is the Cameron graph.
Since the Weisfeiler--Leman stabilization of the Cameron graph is the Cameron scheme, we find $\X \le \cC(m, k, d)$.
If $w > 1$ then $\alpha^* \subset \{0, k\}^d$.
If the elements of $\alpha^*$ have support size $e$ 
then $R_\alpha$ is the Hamming graph $H(M^e, d/e)$, so $\X \le \cH(d/e, M^e)$.
To finish we must show $\cT_{M^e}^{d/e} \le \X$.
For this it suffices to prove that for every $\beta \in \fS$, the elements of $\beta^*$ are sums of elements of $\alpha^*$ (and in particular $\beta^* \subset \{0, k\}^d$).

We apply \Cref{key-prop}(2) to $\alpha$ and $\beta$.
Let $i \in \{1, \dots, d\}$.
Taking $a = e_i$, we find that $N^\beta_\alpha$ is at least the number of $b \in \beta^*$ such that $b_i > 0$.
On the other hand, taking $a$ to be the unique element of $\alpha^*$ dominating $e_i$,
since $a \in \{0,k\}^d$ we find that $N^\beta_\alpha$ is equal to the number of $b \in \beta^*$ such that $a \le b$.
Hence $b_i > 0$ implies $a \le b$.
This implies that $b$ is the sum of those $a \in \alpha^*$ such that $a \le b$, as required.

\bibliography{refs}
\end{document}